\documentclass[a4paper]{amsart}

\usepackage[utf8]{inputenc}
\usepackage[T1]{fontenc}
\usepackage{lmodern, enumerate}
\usepackage{amssymb,amsxtra,amscd,amsmath}
\usepackage{amsthm}
\usepackage{stmaryrd}
\usepackage{amsfonts}
\usepackage{tikz-cd}
\usepackage{bbm}
\usepackage[all]{xy}
\usepackage{xcolor}
\usepackage{nicefrac,mathtools}
\usepackage[inline]{enumitem}
\usepackage{microtype}
\usepackage{comment}
\usepackage[
  pdftitle={On the relation between the product of KK-groups and the KK-group of the product},
  pdfauthor={Diego Mart\'{i}nez},
  pdfsubject={Mathematics},
  colorlinks=true, linkcolor=blue, linkbordercolor=blue, citecolor=red, citebordercolor=red, urlcolor=blue, linktocpage=true
]{hyperref}
\usepackage[capitalise, nameinlink, noabbrev, nosort]{cleveref} % see the cleveref documentation to find out what these options do
\usepackage[lite]{amsrefs}

\newtheorem{alphthm}{Theorem}     %letter numbering

\newtheorem{alphcor}[alphthm]{Corollary}           %letter numbering

\setlist[enumerate]{font=\normalfont}
\crefname{enumi}{}{} % no name for list items
\crefname{enumi}{}{} % no name for nested list items
\setlist[enumerate]{label=(\roman*)}

\numberwithin{equation}{section}
\theoremstyle{plain}

\newtheorem{lemma}[equation]{Lemma}
\newtheorem{proposition}[equation]{Proposition}

\theoremstyle{definition}

\theoremstyle{remark}

\crefname{theorem}{Theorem}{Theorems}
\crefname{alphthm}{Theorem}{Theorems}
\crefname{proposition}{Proposition}{Propositions}
\crefname{lemma}{Lemma}{Lemmas}
\crefname{claim}{Claim}{Claims}
\crefname{remark}{Remark}{Remarks}
\crefname{conjecture}{Conjecture}{Conjectures}
\crefname{corollary}{Corollary}{Corollaries}
\crefname{question}{Question}{Questions}
\crefname{conjecture}{Conjecture}{Conjectures}
\crefname{fact}{Fact}{Facts}
\crefname{claim}{Claim}{Claims}
\crefname{case}{Case}{Cases}
\crefname{convention}{Convention}{Conventions}

\calclayout

\NewDocumentCommand{\CC}{}{\mathbb{C}}

             \NewDocumentCommand{\NN}{}{\mathbb{N}}

             \NewDocumentCommand{\ZZ}{}{\mathbb{Z}}

\NewDocumentCommand{\CA}{}{\mathcal{A}}            
\NewDocumentCommand{\CCC}{}{\mathcal{C}}           
            \NewDocumentCommand{\CF}{}{\mathcal{F}}
            \NewDocumentCommand{\CH}{}{\mathcal{H}}
            
\NewDocumentCommand{\CK}{}{\mathcal{K}}            
\NewDocumentCommand{\CM}{}{\mathcal{M}}

\NewDocumentCommand{\CU}{}{\mathcal{U}}

\NewDocumentCommand{\fs}{}{\mathfrak{s}}

\NewDocumentCommand{\Egrp}{omm}{%
  \IfNoValueTF{#1}
    {E\left(#2,#3\right)}
    {E_{#1}\left(#2,#3\right)}
}
\NewDocumentCommand{\Etop}{omm}{%
  \IfNoValueTF{#1}
    {E^{\rm top}_{\star}\left(#2; #3\right)}
    {E^{\rm top}_{\star,#1}\left(#2; #3\right)}
}
\NewDocumentCommand{\Kth}{}{K}
\NewDocumentCommand{\KKth}{}{KK}

\NewDocumentCommand{\KKgrp}{omm}{%
  \IfNoValueTF{#1}
    {KK\left(#2,#3\right)}
    {KK_{#1}\left(#2,#3\right)}
}
\NewDocumentCommand{\Kgrp}{om}{%
  \IfNoValueTF{#1}
    {K_\star\left(#2\right)}
    {K_{#1}\left(#2\right)}
}
\NewDocumentCommand{\totKgrp}{m}{\underline{\textbf{K}}\left(#1\right)}
\NewDocumentCommand{\Extgrp}{omm}{%
  \IfNoValueTF{#1}
    {{\rm Ext}_{\ZZ}^1\left(#2,#3\right)}
    {{\rm Ext}_{#1}^1\left(#2,#3\right)}
}
\NewDocumentCommand{\Homgrp}{omm}{%
  \IfNoValueTF{#1}
    {{\rm Hom}\left(#2,#3\right)}
    {{\rm Hom}_{#1}\left(#2,#3\right)}
}

\NewDocumentCommand{\variable}{}{-}

\NewDocumentCommand{\hkindalg}{om}{%
  \IfNoValueTF{#1}
    {\CCC_0\left(#2\right)}
    {\CCC_{0,#1}\left(#2\right)}
}
\NewDocumentCommand{\hkalg}{om}{%
  \IfNoValueTF{#1}
    {\CA\left(#2\right)}
    {\CA_{ {#1} }\left(#2\right)}
}

\NewDocumentCommand{\hkhilb}{om}{%
  \IfNoValueTF{#1}
    {\CH(#2)}
    {\CH_{#1}(#2)}
}

\NewDocumentCommand{\fhalg}{O{h}m}{%
  \IfNoValueTF{#1}
    {\CF\left(#2\right)}
    {\CF_{#1}\left(#2\right)}
}
\NewDocumentCommand{\schwartzfuncs}{om}{%
  \IfNoValueTF{#1}
    {\fs\left(#2\right)}
    {\fs\left(#1,#2\right)}
}
\NewDocumentCommand{\delmaux}{}{\alpha}
\NewDocumentCommand{\ddelmaux}{}{\beta}
\NewDocumentCommand{\delm}{o}{%
  \IfNoValueTF{#1}
    {\delmaux}
    {\delmaux^{#1}}
}
\NewDocumentCommand{\reddelm}{o}{%
  \IfNoValueTF{#1}
    {\delmaux_{\red}}
    {\delmaux^{#1}_{\red}}
}
\NewDocumentCommand{\maxdelm}{o}{%
  \IfNoValueTF{#1}
    {\delmaux_{{\rm max}}}
    {\delmaux^{#1}_{{\rm max}}}
}
\NewDocumentCommand{\ddelm}{o}{%
  \IfNoValueTF{#1}
    {\ddelmaux}
    {\ddelmaux^{#1}}
}
\NewDocumentCommand{\redddelm}{o}{%
  \IfNoValueTF{#1}
    {\ddelmaux_{\red}}
    {\ddelmaux^{#1}_{\red}}
}
\NewDocumentCommand{\maxddelm}{o}{%
  \IfNoValueTF{#1}
    {\ddelmaux_{{\rm max}}}
    {\ddelmaux^{#1}_{{\rm max}}}
}

\NewDocumentCommand{\crossedprod}{omm}{%
  \IfNoValueTF{#1}
    {#2 \rtimes #3}
    {#2 \rtimes_{#1} #3}
}
\NewDocumentCommand{\algcrossedprod}{omm}{%
  \IfNoValueTF{#1}
    {#2 \rtimes_{{\rm alg}} #3}
    {#2 \rtimes^{#1}_{{\rm alg}} #3}
}
\NewDocumentCommand{\redcrossedprod}{omm}{%
  \IfNoValueTF{#1}
    {#2 \rtimes_{\red} #3}
    {#2 \rtimes^{#1}_{\red} #3}
}
\NewDocumentCommand{\maxcrossedprod}{omm}{%
  \IfNoValueTF{#1}
    {#2 \rtimes_{\full} #3}
    {#2 \rtimes^{#1}_{\full} #3}
}

\NewDocumentCommand{\cstar}{}{\texorpdfstring{\(C^*\)\nobreakdash-\hspace{0pt}}{*-}}
\NewDocumentCommand{\Star}{}{\texorpdfstring{\(^*\)\nobreakdash-\hspace{0pt}}{*-}}

\NewDocumentCommand{\range}{o}{%
  \IfNoValueTF{#1}
    { {\rm rng} }
    { {\rm rng} \left(#1\right) }
}
\NewDocumentCommand{\source}{o}{%
  \IfNoValueTF{#1}
    { {\rm source} }
    { {\rm source} \left(#1\right) }
}

\NewDocumentCommand{\innerprod}{mm}{\langle #1, #2 \rangle}
\NewDocumentCommand{\linnerprod}{omm}{%
  \IfNoValueTF{#1}
    {_{\rm L}\innerprod{#2}{#3}}
    {_{#1}\innerprod{#2}{#3}}
}
\NewDocumentCommand{\rinnerprod}{omm}{%
  \IfNoValueTF{#1}
    {\innerprod{#2}{#3}_{\rm R}}
    {\innerprod{#2}{#3}_{#1}}
}

\NewDocumentCommand{\multialg}{m}{\CM\left(#1\right)}

\NewDocumentCommand{\red}{}{\rm red}
\NewDocumentCommand{\full}{}{\rm max}

\NewDocumentCommand{\alg}{}{\rm alg}
\NewDocumentCommand{\redalg}{om}{%
  \IfNoValueTF{#1}
    {C^*_{\red}\left(#2\right)}
    {C^*_{\red,#1}\left(#2\right)}
}
\NewDocumentCommand{\redalgnopar}{om}{%
  \IfNoValueTF{#1}
    {C^*_{\red}(#2)}
    {C^*_{\red,#1}(#2)}
}
\NewDocumentCommand{\maxalg}{om}{%
  \IfNoValueTF{#1}
    {C^*_{\full}\left(#2\right)}
    {C^*_{\full,#1}\left(#2\right)}
}             % Maximal
\NewDocumentCommand{\maxalgnopar}{om}{%
  \IfNoValueTF{#1}
    {C^*_{\full}(#2)}
    {C^*_{\full,#1}(#2)}
}

\NewDocumentCommand{\algalg}{om}{%
  \IfNoValueTF{#1}
    {\CC_{\alg}\left(#2\right)}
    {\CC_{\alg,#1}\left(#2\right)}
}
\NewDocumentCommand{\algalgnopar}{om}{%
  \IfNoValueTF{#1}
    {\CC_{\alg}(#2)}
    {\CC_{\alg,#1}(#2)}
}

\NewDocumentCommand{\singideal}{om}{%
  \IfNoValueTF{#1}
    {J_{#2}}
    {J_{#2}^{#1}}
}
\NewDocumentCommand{\condexp}{o}{%
  \IfNoValueTF{#1}
    {E}
    {E_{#1}}
}
\NewDocumentCommand{\redcondexp}{o}{%
  \IfNoValueTF{#1}
    {P}
    {P_{#1}}
}
\NewDocumentCommand{\esscondexp}{o}{%
  \IfNoValueTF{#1}
    {EL}
    {EL_{#1}}
}

\NewDocumentCommand{\tensor}{}{\otimes}

\NewDocumentCommand{\id}{o}{%
  \IfNoValueTF{#1}
    {{\rm id}}
    {{\rm id}_{#1}}
}

\begin{document}

%%%%%%%%%%%%%%%%%%%%%%%%%%%%%%%%%%%%%%%%%%%%%%%%%%%%%%%%%%%%%%%%%%%%%%%%%%
% TITLE AND DATA
%%%%%%%%%%%%%%%%%%%%%%%%%%%%%%%%%%%%%%%%%%%%%%%%%%%%%%%%%%%%%%%%%%%%%%%%%%
\title[KK-group of product vs. product of KK-groups]{On the relation between the product of KK-groups and the KK-group of the product}
\author[Diego Mart\'{i}nez]{Diego Mart\'{i}nez \(^{1}\)}
\address{Department of Mathematics, KU Leuven, Celestijnenlaan 200B, 3001 Leuven, Belgium.}
\email{diego.martinez@kuleuven.be}

\begin{abstract}
  We observe that the canonical map \(\KKgrp{A}{\prod_{n \in \NN} B_n} \to \prod_{n \in \NN} \KKgrp{A}{B_n}\) is an isomorphism of abelian groups whenever \(A\) enjoys the Universal Coefficient Theorem and \(B_n\) are unital, simple and purely infinite \cstar{}algebras.
  This clarifies an aspect of previous work of Dadarlat--Eilers and Tikuisis--White--Winter.
\end{abstract}

\subjclass[2020]{19K35, 46L80}
% 19K35 : KK theory
% 46L80 : K-theory and operator algebras

\keywords{KK-theory; Universal Coefficient Theorem}

\thanks{{\(^{1}\)} Supported by projects G085020N and 1218726N funded by the Research Foundation Flanders (FWO)}

%%%%%%%%%%%%%%%%%%%%%%%%%%%%%%%%%%%%%%%%%%%%%%%%%%%%%%%%%%%%%%%%%%%%%%%
% TITLE
%%%%%%%%%%%%%%%%%%%%%%%%%%%%%%%%%%%%%%%%%%%%%%%%%%%%%%%%%%%%%%%%%%%%%%%
\maketitle
%\tableofcontents

%%%%%%%%%%%%%%%%%%%%%%%%%%%%%%%%%%%%%%%%%%%%%%%%%%%%%%%%%%%%%%%%%%%%%%%
% INTRODUCTION
%%%%%%%%%%%%%%%%%%%%%%%%%%%%%%%%%%%%%%%%%%%%%%%%%%%%%%%%%%%%%%%%%%%%%%%
\section{Introduction} \label{sec:intro}

Inspired by the pioneering work of Brown, Douglas and Fillmore \cite{brown-douglas-fillmore-1977}, Rosenberg and Schochet \cite{rosenberg-schochet-1987-uct} proved that for a large class of separable \cstar{}algebras there is a short exact sequence
\begin{equation} \label{eq:intro-uct}
  0 \to \Extgrp{\Kgrp[\star]{A}}{\Kgrp[\star + 1]{B}} \to \KKgrp{A}{B} \to \Homgrp{\Kgrp[\star]{A}}{\Kgrp[\star]{B}} \to 0.
\end{equation}
If \(A\) has the property that \eqref{eq:intro-uct} is short exact for all separable \(B\) then \(A\) is said to \emph{enjoy}, or \emph{satisfy}, the \emph{Universal Coefficient Theorem} (or \emph{UCT} for short).
The importance of this property cannot be overstated in the study of nuclear (separable) \cstar{}algebras, as it allows to completely describe the, a priori non-approachable, group \(\KKgrp{A}{B}\) by the vastly easier to compute groups \(\Kgrp[\star]{A}\) and \(\Kgrp[\star]{B}\).
We refer the reader to \cites{TikuisisAaron2017QonC,dadarlat-eilers-2002,Kasparov-1988,blackadar-book-k-theory,rosenberg-schochet-1987-uct,higson-kasparov-2000} for several deep applications.

Particularly, one of the reasons why \eqref{eq:intro-uct} is so relevant in the current research is its use in the classification of \cstar{}algebras.
In their groundbreaking \cite{TikuisisAaron2017QonC}; and partly based on previous ideas of Dadarlat and Eilers \cite{dadarlat-eilers-2002}; Tikuisis, White and Winter \cite{TikuisisAaron2017QonC}*{Section 3} are interested in the injectivity of the map 
\[
  \Phi_{nuc} \colon \KKgrp[nuc]{A}{\prod_{n \in \NN} B_n} \to \prod_{n \in \NN} \KKgrp[nuc]{A}{B_n},
\]
where \(A\) and \(\{B_n\}_{n \in \NN}\) are separable \cstar{}algebras, see \cite{TikuisisAaron2017QonC}*{Equation (3.2)}.
They, in fact, need something slightly different: as Dadarlat and Eilers in \cite{dadarlat-eilers-2002}*{Theorem 4.10}, they need the map
\(
  \KKgrp[nuc]{A}{\prod_{n \in \NN} B_n} \to \prod_{n \in \NN} \Homgrp[\Lambda]{\totKgrp{A}}{\totKgrp{B_n}}
\)
to be injective whenever the \cstar{}algebras \(\{B_n\}_{n \in \NN}\) are sufficiently well-behaved.
It is suggested in \cite{TikuisisAaron2017QonC} that for injectivity of \(\Phi_{nuc}\) above one needs \(\{B_n\}_{n \in \NN}\) to enjoy some mild properties.
In this brief note we clarify this aspect.

\begin{alphthm} \label{main-theorem}
  Let \(A, \{B_n\}_{n \in \NN}\) be separable \cstar{}algebras, where \(A\) satisfies the UCT.
  Suppose that either the canonical map \(\phi \colon \Kgrp[\star]{\prod_{n \in \NN} B_n} \to \prod_{n \in \NN} \Kgrp[\star]{B_n}\) is injective and \(\Kgrp[\star]{A}\) is torsion free, or that \(\phi\) is an isomorphism.
  Then
  \begin{align*}
    \Phi \colon \KKgrp{A}{\prod_{n \in \NN} B_n} \to \prod_{n \in \NN} \KKgrp{A}{B_n}
  \end{align*}
  is an injective group homomorphism.
  In particular, this is the case whenever:
  \begin{enumerate}
    \item \(\{B_n\}_{n \in \NN}\) are simple, unital, and purely infinite; or
    \item \(\{B_n\}_{n \in \NN}\) are simple, unital, infinite dimensional and tracially AF, and \(\Kgrp[\star]{A}\) is torsion free.
  \end{enumerate}
\end{alphthm}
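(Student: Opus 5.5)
The plan is to compare the UCT sequence for \(A\) and \(\prod_n B_n\) with the product over \(n\) of the UCT sequences for \(A\) and \(B_n\). Write \(B = \prod_{n \in \NN} B_n\). The product \(B\) is not separable, but \(A\) is, and the UCT for \(A\) extends to arbitrary \(\sigma\)\nobreakdash-unital (indeed arbitrary) coefficient algebras. This is standard, since both sides commute with the relevant limits, or one reduces to separable subalgebras. So we get a natural short exact sequence
\[
  0 \to \Extgrp{\Kgrp[\star]{A}}{\Kgrp[\star+1]{B}} \to \KKgrp{A}{B} \to \Homgrp{\Kgrp[\star]{A}}{\Kgrp[\star]{B}} \to 0,
\]
and similarly one for each \(B_n\). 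Products of short exact sequences of abelian groups are exact. Naturality of the UCT with respect to the coordinate projections \(B \to B_n\) gives a commutative diagram with exact rows. The top row is the sequence above. The bottom row is the product over \(n\) of the UCT sequences for \(B_n\). The vertical maps are \(\phi_* = \Homgrp{\Kgrp[\star]{A}}{\phi}\) on the right, \(\Phi\) in the middle, and the map induced by \(\phi\) on \(\mathrm{Ext}\) on the left, using \(\prod_n \mathrm{Ext}(G, H_n) \cong \mathrm{Ext}(G, \prod_n H_n)\) and \(\prod_n \mathrm{Hom}(G,H_n) \cong \mathrm{Hom}(G,\prod_n H_n)\).

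By the four/five lemma, injectivity of \(\Phi\) follows once the outer vertical maps are injective. \(\mathrm{Hom}(\Kgrp[\star]{A}, -)\) is left exact, so \(\phi_*\) is injective whenever \(\phi\) is injective. For the Ext term there are two cases.
\begin{itemize}
  \item If \(\phi\) is an isomorphism, the induced map on Ext is an isomorphism.
  \item If \(\Kgrp[\star]{A}\) is torsion free and \(\phi\) is only injective, I would use the long exact sequence for \(\mathrm{Hom}(\Kgrp[\star]{A},-)\) applied to \(0 \to \Kgrp[\star]{B} \to \prod_n \Kgrp[\star]{B_n} \to C \to 0\). Injectivity of \(\mathrm{Ext}(\Kgrp[\star]{A}, \Kgrp[\star]{B}) \to \mathrm{Ext}(\Kgrp[\star]{A}, \prod_n \Kgrp[\star]{B_n})\) is then equivalent to surjectivity of \(\mathrm{Hom}(\Kgrp[\star]{A}, \prod_n \Kgrp[\star]{B_n}) \to \mathrm{Hom}(\Kgrp[\star]{A}, C)\).
\end{itemize}

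The torsion-free case is the main obstacle. Surjectivity in the last step is not automatic for torsion-free but non-free groups such as \(\mathbb{Q}\). The fix I would try is to drop the five lemma here and do a direct chase instead. Take \(x \in \ker \Phi\). Its image in \(\mathrm{Hom}\) vanishes by injectivity of \(\phi_*\), so \(x\) comes from \(\mathrm{Ext}(\Kgrp[\star]{A}, \Kgrp[\star+1]{B})\). For torsion-free \(G\) one has \(\mathrm{Ext}(G,H) \cong \varprojlim^1\) over finitely generated free subgroups, i.e.\ \(\mathrm{Pext}\), and \(\varprojlim^1\) of an injection of towers is injective when the cokernel tower is Mittag-Leffler. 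That condition should be arranged using that \(C\) has the relevant divisibility/pure-injectivity in the cases of interest. If this general argument fails, I would instead use the specific structure in case (ii): there \(\phi\) has pure image, so the sequence \(0 \to \Kgrp[\star]{B} \to \prod_n \Kgrp[\star]{B_n} \to C \to 0\) is pure exact, and \(\mathrm{Ext}(G,-)\) of a pure monomorphism with \(G\) torsion free is injective.

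For the examples, I would invoke known K-theory computations of products. In case (i), unital simple purely infinite algebras have \(K_0\) given by all projections and \(K_1\) by unitaries modulo the connected component. Combined with uniform bounds (every class is represented by a projection in \(B_n\) itself; unitaries with trivial class are connected via paths of uniformly bounded length and exponential rank), this shows \(\phi\) is an isomorphism. In case (ii), tracial AF algebras have stable rank one and real rank zero with uniform cancellation estimates, which give injectivity of \(\phi\), so the torsion-free hypothesis on \(\Kgrp[\star]{A}\) applies.
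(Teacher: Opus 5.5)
The isomorphism case of your proposal is fine and matches the paper: the same diagram of UCT rows, left exactness of $\mathrm{Hom}$ for the right-hand map, and the four lemma.

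The gap is the torsion-free case, and neither of your repairs works. Your claim that $\mathrm{Ext}(G,-)$ is injective on pure monomorphisms when $G$ is torsion free is false. Consider $\ell^\infty(\mathbb{N},\mathbb{Z})\subseteq\prod_n\mathbb{Z}$. It is pure, because the quotient $C$ is torsion free, and $\mathrm{Hom}(\mathbb{Q},\prod_n\mathbb{Z})=0$. Yet the class of $(n!)_n$ is a nonzero element of $C$ divisible by every $k$. Hence $\mathrm{Hom}(\mathbb{Q},C)\neq 0$, and $\mathrm{Ext}(\mathbb{Q},\ell^\infty(\mathbb{N},\mathbb{Z}))\to\mathrm{Ext}(\mathbb{Q},\prod_n\mathbb{Z})$ has nonzero kernel. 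Purity makes $\mathrm{Hom}(G,-)$ preserve the surjection only for pure-projective $G$ (direct sums of cyclic groups), and the torsion-free ones among these are free.

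Your $\varprojlim^1$ route is circular. For the exact sequence of towers $\mathrm{Hom}(G_k,K)\hookrightarrow\mathrm{Hom}(G_k,P)\twoheadrightarrow\mathrm{Hom}(G_k,H)$, injectivity on $\varprojlim^1$ is equivalent to surjectivity of $\mathrm{Hom}(G,P)\to\mathrm{Hom}(G,H)$. That is exactly what you could not prove. Mittag-Leffler for the cokernel tower only gives $\varprojlim^1\mathrm{Hom}(G_k,H)=0$, i.e. surjectivity on $\varprojlim^1$.

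No argument can close this gap, because the torsion-free clause, including (ii), is false. Take $A=C_0(\mathbb{R})\otimes\mathcal{Q}$ with $\mathcal{Q}$ the universal UHF algebra, so $A$ satisfies the UCT, $K_0(A)=0$ and $K_1(A)=\mathbb{Q}$. Take $B_n=M_{2^\infty}$ for all $n$.
\begin{itemize}
\item Cancellation and uniformly bounded exponential length in matrices over $M_{2^\infty}$ give $K_1(\prod_n B_n)=0$. They also show that $\phi$ identifies $K_0(\prod_n B_n)$ with the bounded sequences in $\prod_n\mathbb{Z}[1/2]$, so $\phi$ is injective.
\item The $\mathrm{Hom}$-terms vanish, so $\Phi$ becomes $\mathrm{Ext}(\mathbb{Q},\ell^\infty(\mathbb{N},\mathbb{Z}[1/2]))\to\mathrm{Ext}(\mathbb{Q},\prod_n\mathbb{Z}[1/2])$.
\item Since $\mathrm{Hom}(\mathbb{Q},\mathbb{Z}[1/2])=0$, the kernel of this map is $\mathrm{Hom}(\mathbb{Q},C')$ for the corresponding cokernel $C'$. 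This is nonzero by the same $(n!)_n$ argument.
\end{itemize}

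The paper's own proof of this case rests on the assertion that torsion-free $K_*(A)$ forces $\mathrm{Ext}(K_*(A),-)=0$. That holds only when $K_*(A)$ is free, so your diagnosis of the obstruction was correct. What survives is the isomorphism case, hence (i) and the corollary. The injective case also holds under the stronger hypothesis that $K_*(A)$ is free.
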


\begin{alphcor} \label[corollary]{intro-cor}
  If \(A\) is separable and enjoys the UCT, and \(\{B_n\}_{n \in \NN}\) are separable, unital, simple and purely infinite, then \(\KKgrp{A}{\prod_{n \in \NN} B_n} \cong \prod_{n \in \NN} \KKgrp{A}{B_n}\).
\end{alphcor}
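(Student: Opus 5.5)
The statement to prove is \cref{intro-cor}. The plan is to combine injectivity from \cref{main-theorem}(i) with a direct surjectivity argument built on the UCT for \(A\). First I will record that for simple, unital, purely infinite \(B_n\) the canonical map \(\phi \colon \Kgrp[\star]{\prod_n B_n} \to \prod_n \Kgrp[\star]{B_n}\) is an isomorphism. This is the input used for case (i) of \cref{main-theorem}. The reason is that every element of \(K_0(B_n)\) is the class of a projection in \(B_n\) itself, and every element of \(K_1(B_n)\) is the class of a unitary in \(B_n\) (Cuntz). Moreover, Murray--von Neumann equivalence and homotopy of unitaries can be realized inside \(B_n\) with uniform control. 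Hence sequences of projections and unitaries define elements of \(\prod_n B_n\), giving surjectivity, and \cref{main-theorem} already gives injectivity of \(\Phi\).

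For surjectivity of \(\Phi\), I use naturality of the UCT sequence. Write \(B = \prod_n B_n\); it is non-separable, so I note that the UCT for \(A\) extends to arbitrary \(B\) by a standard inductive-limit or bootstrap argument, or I simply accept the form used in the proof of \cref{main-theorem}. The coordinate projections \(B \to B_n\) then yield a map of short exact sequences
\[
  \begin{CD}
    0 @>>> \Extgrp{\Kgrp[\star]{A}}{\Kgrp[\star+1]{B}} @>>> \KKgrp{A}{B} @>>> \Homgrp{\Kgrp[\star]{A}}{\Kgrp[\star]{B}} @>>> 0\\
    @. @VVV @VV{\Phi}V @VVV @.\\
    0 @>>> \prod_n \Extgrp{\Kgrp[\star]{A}}{\Kgrp[\star+1]{B_n}} @>>> \prod_n \KKgrp{A}{B_n} @>>> \prod_n \Homgrp{\Kgrp[\star]{A}}{\Kgrp[\star]{B_n}} @>>> 0.
  \end{CD}
\]
The bottom row is exact because products of abelian groups are exact.

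Since \(\phi\) is an isomorphism, the right vertical map is \(\Homgrp{\Kgrp[\star]{A}}{\prod_n \Kgrp[\star]{B_n}} \cong \prod_n \Homgrp{\Kgrp[\star]{A}}{\Kgrp[\star]{B_n}}\), which is an isomorphism by the universal property of products. The left vertical map is the comparison \(\Extgrp{G}{\prod_n H_n} \to \prod_n \Extgrp{G}{H_n}\). This is an isomorphism because \({\rm Ext}^1_{\ZZ}(G,-)\) commutes with arbitrary products: take a projective resolution of \(G\) and use that \({\rm Hom}(P,-)\) commutes with products and that products are exact. The five lemma then shows that \(\Phi\) is an isomorphism. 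In fact this argument reproves injectivity as well.

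I expect the main obstacle to be justifying the UCT sequence and its naturality for the non-separable target \(\prod_n B_n\). I would handle it by writing \(\prod_n B_n\) as the directed union of its separable \cstar{}subalgebras. Alternatively, I would follow exactly the framework the paper uses in proving \cref{main-theorem}, since injectivity there already requires the same diagram.
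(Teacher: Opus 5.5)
Your proposal is correct and is essentially the paper's own proof. It uses the same natural UCT diagram, whose outer vertical maps are isomorphisms because \(\phi\) is one (\cref{inj-kth}) and because \(\Homgrp{\Kgrp[\star]{A}}{\variable}\) and \(\Extgrp{\Kgrp[\star]{A}}{\variable}\) commute with products (\cref{hom-iso,ext-inj}), and then concludes with the five lemma. The only nontrivial input hidden in your phrase ``uniform control'' is the injectivity of \(\phi\) on \(K_1\), which needs a uniform bound on exponential length coming from real rank zero (\cite{dadarlat-eilers-2002}*{Lemma 3.3}, as cited in \cref{inj-kth}); and your directed-union treatment of the non-separable target \(\prod_{n \in \NN} B_n\) is plausible, since the family of separable subalgebras is countably directed and \(A\) is separable (you would still need to check that \(\KKgrp{A}{\variable}\), \({\rm Hom}\) and \({\rm Ext}\) pass to that colimit), and it addresses a point the paper leaves implicit.
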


Morally speaking, \cref{main-theorem} states that whenever \(A\) enjoys the UCT, then countably-many \Star{}homomorphisms are null-homotopic if and only if they are \emph{uniformly} null-homotopic.
This reinforces the idea that the UCT is a sort of ``Baire Category Theorem'' for \cstar{}algebras: it allows to perform compactness arguments on Kasparov's bivariant \KKth-theory \cite{Kasparov-1988}.
This is, of course, modulo the same result holding for the usual \Kth-theory of \(\{B_n\}_{n \in \NN}\), which does \emph{not} necessarily hold.
Thus explaining the extra assumption in \cref{main-theorem}.

\vspace{1.5mm}

\noindent \textbf{Acknowledgements:} we would like to thank G\'abor Szab\'o for some comments about a previous version of this manuscript.

\section{Proof of \texorpdfstring{\cref{main-theorem}}{Theorem A}}

To begin the discussion, recall that \(\Kgrp[\star]{B} \coloneqq \Kgrp[0]{B} \oplus \Kgrp[1]{B}\) and that, similarly, \(\Kgrp[\star+1]{B} \coloneqq \Kgrp[1]{B} \oplus \Kgrp[0]{B}\).
Moreover, group homomorphisms \(\Kgrp[\star]{A} \to \Kgrp[\star]{B}\) are \emph{graded}, so they are a sum of two group homomorphisms \(\Kgrp[0]{A} \to \Kgrp[0]{B}\) and \(\Kgrp[1]{A} \to \Kgrp[1]{B}\).
There is a canonical map
\begin{equation} \label{phi}
  \phi \colon \Kgrp[\star]{\prod_{n \in \NN} B_n} \to \prod_{n \in \NN} \Kgrp[\star]{B_n},
\end{equation}
which is the toy version of the map \(\Phi\) in \cref{main-theorem}.
Quoting Dadarlat and Eilers~\cite{dadarlat-eilers-2002}*{Section 3.2}: ``\emph{it is well known, although perhaps not as well known as it should be, that \(\phi\) is not an isomorphism in general}''.
Notice that, in grade \(0\), \(\phi\) sends the class \([(p_n)_{n \in \NN}]_0\) of a tuple of projections to the tuple \(([p_n]_0)_{n \in \NN}\).
Whence, in general, \(p_n \in {\rm Proj}(M_{m(n)}(B_n))\) and there is no uniform bound on \(m(n)\).\footnote{\, ``Uniform'' here means among the elements of \(\Kgrp[0]{\prod_{n \in \NN} B_n}\). Of course there is no uniform bound among the sizes of the projections one may consider.}
Likewise there is no bound in the dimension witnessing the equality \([(p_n)_{n \in \NN}]_0 = [(q_n)_{n \in \NN}]_0\).

\vspace{1.5mm}

Before proceeding with the proof we need some well known lemmas from homological algebra, whose proofs we sketch only for the \cstar{}reader.
Henceforth, \(\Gamma\) and \(\{\Lambda_n\}_{n \in \NN}\) are discrete abelian groups.

\begin{lemma} \label[lemma]{hom-iso}
  There is a group isomorphism \(\Psi \colon \Homgrp{\Gamma}{\prod_{n \in \NN} \Lambda_n} \to \prod_{n \in \NN} \Homgrp{\Gamma}{\Lambda_n}\).
\end{lemma}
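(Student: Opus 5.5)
The plan is to use the universal property of the product in the category of abelian groups. Let \(\pi_m \colon \prod_{n \in \NN} \Lambda_n \to \Lambda_m\) denote the canonical coordinate projections. We define \(\Psi\) on \(f \in \Homgrp{\Gamma}{\prod_{n \in \NN} \Lambda_n}\) by
\[
  \Psi(f) \coloneqq (\pi_n \circ f)_{n \in \NN}.
\]
The first step is to check that \(\Psi\) is a group homomorphism. This holds because each \(\pi_n\) is additive, so \(\pi_n \circ (f+g) = \pi_n \circ f + \pi_n \circ g\), and the group structure on \(\prod_{n \in \NN} \Homgrp{\Gamma}{\Lambda_n}\) is coordinatewise.

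Next, we build an explicit inverse. Given a tuple \((f_n)_{n \in \NN}\) with \(f_n \in \Homgrp{\Gamma}{\Lambda_n}\), set
\[
  \Theta\bigl((f_n)_{n}\bigr)(\gamma) \coloneqq (f_n(\gamma))_{n \in \NN} \quad \text{for } \gamma \in \Gamma.
\]
This map \(\Gamma \to \prod_{n \in \NN} \Lambda_n\) is a homomorphism, because addition in the product is coordinatewise and each \(f_n\) is additive.

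It then remains to verify the two compositions:
\[
  \pi_n \circ \Theta\bigl((f_m)_m\bigr) = f_n,
\]
and, for any \(f\) and \(\gamma\),
\[
  \Theta\bigl(\Psi(f)\bigr)(\gamma) = (\pi_n(f(\gamma)))_n = f(\gamma).
\]
The second identity uses that an element of the product is determined by its coordinates. Hence \(\Theta = \Psi^{-1}\), and \(\Psi\) is an isomorphism.

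There is no genuine obstacle; the argument is purely formal. The only point needing care is that we use the full direct product rather than the direct sum. This is exactly what allows \((f_n(\gamma))_n\) to be a legitimate element, with no finiteness condition on its support. For the same reason no countability assumption on the index set is needed.
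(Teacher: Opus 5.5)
Your proof is correct and follows the same approach as the paper. Both identify a homomorphism \(\Gamma \to \prod_{n} \Lambda_n\) with its tuple of coordinate maps \((\pi_n \circ f)_{n}\), and both get surjectivity by tupling an arbitrary family \((f_n)_{n}\) back into a single map. You also write out the inverse \(\Theta\) and the two composition identities, which the paper leaves implicit.
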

\begin{proof}
  An element \(\varphi \in \Homgrp{\Gamma}{\prod_{n \in \NN} \Lambda_n}\) is just given by countably many maps \(\varphi = (\varphi_n)_{n \in \NN}\).
  Sending \(\varphi \mapsto (\varphi_n)_{n \in \NN}\) is, morally, the identity, and it is an injective group homomorphism. 
  The fact that it also is surjective follows since any countably choice of maps \(\{\varphi_n\}_{n \in \NN}\) may be tupled up to \(\varphi\).
\end{proof}

\begin{lemma} \label[lemma]{ext-inj}
  There is a group isomorphism
  \(\Xi \colon \Extgrp{\Gamma}{\prod_{n \in \NN} \Lambda_n} \to \prod_{n \in \NN} \Extgrp{\Gamma}{\Lambda_n}\).
\end{lemma}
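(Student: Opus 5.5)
We compute both sides with a single projective resolution of \(\Gamma\) and then use \cref{hom-iso}. Since \(\Gamma\) is abelian and subgroups of free abelian groups are free, there is a short exact sequence
\[
  0 \to F_1 \xrightarrow{\ \iota\ } F_0 \to \Gamma \to 0
\]
with \(F_0, F_1\) free abelian. For any abelian group \(\Lambda\), the standard description of \({\rm Ext}^1_{\ZZ}\) via this resolution gives
\[
  \Extgrp{\Gamma}{\Lambda} \cong {\rm coker}\bigl(\iota^* \colon \Homgrp{F_0}{\Lambda} \to \Homgrp{F_1}{\Lambda}\bigr),
\]
and this isomorphism is natural in \(\Lambda\).

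Apply this with \(\Lambda = \prod_{n \in \NN} \Lambda_n\) and with each \(\Lambda = \Lambda_n\). By \cref{hom-iso}, applied with \(F_0\) and with \(F_1\) in place of \(\Gamma\), we get isomorphisms \(\Homgrp{F_i}{\prod_n \Lambda_n} \cong \prod_n \Homgrp{F_i}{\Lambda_n}\). These are compatible with \(\iota^*\): under them, \(\iota^*\) on the left corresponds to \(\prod_n \iota^*_n\) on the right, because both send \((\varphi_n)_n\) to \((\varphi_n \circ \iota)_n\). So the map \(\Homgrp{F_0}{\prod_n\Lambda_n} \to \Homgrp{F_1}{\prod_n\Lambda_n}\) is identified with the product of the maps \(\iota_n^*\).

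Cokernels commute with arbitrary products of abelian groups: \({\rm coker}(\prod_n f_n) = \prod_n {\rm coker}(f_n)\). Indeed, the image of \(\prod_n f_n\) is \(\prod_n {\rm im}(f_n)\), and
\[
  \prod_n M_n \big/ \prod_n N_n \cong \prod_n (M_n / N_n)
\]
via the coordinatewise quotient map. This map is surjective by tupling representatives, and its kernel is exactly \(\prod_n N_n\). This yields the isomorphism \(\Xi\).

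It remains to check that \(\Xi\) is the canonical map, namely the product of the maps induced by the projections \(\pi_n \colon \prod_m \Lambda_m \to \Lambda_n\). This holds by naturality of the resolution description in \(\Lambda\), applied to each \(\pi_n\). Independence of the choice of resolution is standard.

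The only point needing care is the exactness of products (as used in the cokernel step), together with this naturality check; both are routine. Equivalently, one could cite that \({\rm Ext}^1_{\ZZ}(\Gamma, -)\), as a right derived functor of \(\Homgrp{\Gamma}{-}\), commutes with products because products in abelian groups are exact.
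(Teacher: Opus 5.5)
Your proposal is correct and follows essentially the same route as the paper: you compute ${\rm Ext}^1$ as the cokernel of $\iota^*$ on a two-step free resolution, use \cref{hom-iso} to identify the map for $\prod_n \Lambda_n$ with the product of the maps $\iota_n^*$, and observe that the coordinatewise map from the cokernel of the product to the product of the cokernels is bijective. You are slightly more complete than the paper, which only spells out injectivity of this coordinatewise map, since you also check surjectivity (by tupling representatives) and that $\Xi$ agrees with the map induced by the projections $\pi_n$.
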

\begin{proof}
  Let \(0 \to F_1 \to F_0 \to \Gamma \to 0\) be a short exact sequence, where \(F_0, F_1\) are free abelian groups. Denote by \(f\) the map \(f \colon F_1 \to F_0\) appearing in the short exact sequence.
  Pre-composition by \(f\) defines a group homomorphism \(f^* \colon \Homgrp{F_0}{\Lambda} \to \Homgrp{F_1}{\Lambda}\) for any abelian group \(\Lambda\).
  Moreover, it is well known from homological algebra that
  \[
    \Extgrp{\Gamma}{\Lambda} \cong {\rm coker}\left(\Homgrp{F_0}{\Lambda} \xrightarrow{f^*} \Homgrp{F_1}{\Lambda}\right).
  \]
  With the \cstar{}inclined reader in mind: this may be shown directly.
  Indeed, given any element \(\eta \colon 0 \to \Lambda \to E \to \Gamma \to 0\) in \(\Extgrp{\Gamma}{\Lambda}\) one may lift the quotient \(F_0 \to \Gamma\) to a map \(F_0 \to E\), since \(F_0\) is free (and abelian) by assumption.
  Restricting the lift \(F_0 \to E\) to \(F_1 = \ker(F_0 \to \Gamma)\) yields a group homomorphism \(\gamma(\eta) \colon F_1 \to \Lambda\).
  Now, different choices of lift \(F_0 \to E\) change \(\gamma(\eta)\) by an element in \(\Homgrp{F_0}{\Lambda}\), which in particular precisely means that the assignment
  \[
    \Extgrp{\Gamma}{\Lambda} \ni \left[\eta\right] \mapsto \left[\gamma\left(\eta\right)\right] \in {\rm coker}\left(\Homgrp{F_0}{\Lambda} \xrightarrow{f^*} \Homgrp{F_1}{\Lambda}\right)
  \]
  is a well-defined group homomorphism.
%  To clarify the situation, and to improve legibility, the setting may be summarized as the following diagram
%  \begin{equation*}
%    \begin{tikzcd}
%      \eta \colon 0 \arrow{r}{} & \Lambda \arrow{r}{} & E \arrow{r}{} & \Gamma \arrow{r}{} \arrow[equal]{d}{} & 0 \\
%      0 \arrow{r}{} & F_1 \arrow{r}{f} \arrow{u}{\gamma\left(\eta\right)} & F_0 \arrow{r}{} \arrow[dashed]{u}{} & \Gamma \arrow{r}{} & 0 
%    \end{tikzcd}
%  \end{equation*}
%  being commutative.
  Moreover, one can show that \([\eta] \mapsto [\gamma(\eta)]\) is an isomorphism.
%  In order to do this, let \(\beta \in \Homgrp{F_1}{\Lambda}\) be any given homomorphism \(F_1 \to \Lambda\).
%  The extension
%  \[
%    \zeta\left(\beta\right) \colon 0 \to \Lambda \to \left(\Lambda \oplus F_0\right)/\left\{\left(\beta\left(x\right), -f\left(x\right)\right) : x \in F_1\right\} \to \Gamma \to 0
%  \]
%  can be seen to satisfy \([\gamma(\zeta(\beta))] = [\beta]\), so that \(\gamma\) has a left inverse.
%  It can also be proven to be a right one.

  Using these ideas for \(\Lambda = \prod_{n \in \NN} \Lambda_n\) and applying \cref{hom-iso}, the map \(\Xi\) in the statement can be rewritten to be the map
  \begin{equation*}
    {\rm coker}\left(\prod_{n \in \NN} \Homgrp{F_0}{\Lambda_n} \xrightarrow{(f^*_n)_n} \prod_{n \in \NN} \Homgrp{F_1}{\Lambda_n}\right) \xrightarrow{\Xi} \prod_{n \in \NN} {\rm coker} \left(\Homgrp{F_0}{\Lambda_n} \xrightarrow{f_n^*} \Homgrp{F_1}{\Lambda_n}\right),
  \end{equation*}
  which sends \([(b_n)_{n \in \NN}]\) to \(([b_n])_{n \in \NN}\), where \(b_n \in \Homgrp{F_1}{\Lambda_n}\).
  As this map is defined coordinate wise it is not hard to check that it is, indeed, injective.
\end{proof}

Note that \cref{ext-inj} will not be sufficient for the proof of \cref{main-theorem}, as it only deals with taking the infinite product \(\prod_{n \in \NN}\) out of the \(\Extgrp{\variable}{\variable}\): it does not deal with taking it out of \(\Kgrp[\star]{\variable}\).
For that we will need an alternative argument.
If \(\phi \colon \Lambda_1 \to \Lambda_2\) is a group homomorphism then the picture of \(\Extgrp{\Gamma}{\variable}\) in the (proof of) \cref{ext-inj} allows to construct an induced group homomorphism \(\phi^* \colon \Extgrp{\Gamma}{\Lambda_1} \to \Extgrp{\Gamma}{\Lambda_2}\) via:
\begin{align} \label{eq-phi-induced}
  \phi^* \colon {\rm coker}\left(\Homgrp{F_0}{\Lambda_1} \xrightarrow{f^*} \Homgrp{F_1}{\Lambda_1}\right) & \to {\rm coker}\left(\Homgrp{F_0}{\Lambda_2} \xrightarrow{f^*} \Homgrp{F_1}{\Lambda_2}\right), \nonumber \\
  \beta + f^*\left(\Homgrp{F_0}{\Lambda_1}\right) & \mapsto \phi \beta + f^*\left(\Homgrp{F_0}{\Lambda_2}\right).
\end{align}

We now turn to the study of the ``in particular'' statement in \cref{main-theorem}.
We are interested in the injectivity (or bijectivity) of the toy map \(\phi \colon \Kgrp[\star]{\prod_{n \in \NN} B_n} \to \prod_{n \in \NN} \Kgrp[\star]{B_n}\) in \eqref{phi}.

\begin{proposition} \label[proposition]{inj-kth}
  If \(\{B_n\}_{n \in \NN}\) are unital, simple, separable, and purely infinite then \(\phi \colon \Kgrp[\star]{\prod_{n \in \NN} B_n} \to \prod_{n \in \NN} \Kgrp[\star]{B_n}\) is an isomorphism of abelian groups.

  Likewise, if \(\{B_n\}_{n \in \NN}\) are unital, simple, infinite dimensional and tracially AF then \(\phi\) is injective. 
\end{proposition}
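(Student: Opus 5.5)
The plan is to treat degrees \(0\) and \(1\) separately, exploiting the strong comparison theory available in each class. Throughout, write \(B \coloneqq \prod_{n \in \NN} B_n\), which is unital.

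\textbf{Purely infinite case, degree \(0\).} By Cuntz, every class in \(\Kgrp[0]{B_n}\) is represented by a projection \(p \in B_n\) itself, with no matrix amplification. Moreover, two nonzero projections in \(B_n\) are Murray--von Neumann equivalent if and only if their \(\Kth_0\)-classes agree. The only caveat is the zero class, which is represented by a nonzero projection \(p\) with \([p]_0=0\) (such \(p\) exist in any unital purely infinite simple algebra).

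For surjectivity, take \((x_n)_n \in \prod_n \Kgrp[0]{B_n}\) and pick a nonzero projection \(p_n \in B_n\) with \([p_n]_0 = x_n\). Then \(p=(p_n)_n\) is a projection in \(B\) and \(\phi([p]_0)=(x_n)_n\).

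For injectivity, any element of \(\Kgrp[0]{B}\) has the form \([p]_0-[q]_0\) with \(p,q\) projections in \(M_k(B) = \prod_n M_k(B_n)\); here \(k\) is uniform because there is a single element of \(B\). I will first show that every projection in \(M_k(B)\) is equivalent to one in \(B\), uniformly in \(n\). Indeed, \(1_{B_n}\) is properly infinite, so there are isometries \(s_1,\dots,s_k \in B_n\) with mutually orthogonal ranges. The map \((a_{ij}) \mapsto \sum_{i,j} s_i a_{ij} s_j^*\) is a \Star{}homomorphism \(M_k(B_n)\to B_n\) inducing the identity on \(\Kth_0\). Choosing these isometries in each coordinate gives a uniform compression \(M_k(B)\to B\).

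We may therefore assume \(p,q\in B\) with \([p_n]_0=[q_n]_0\) for all \(n\). Adding to both \(p\) and \(q\) a fixed nonzero projection of zero class in an orthogonal corner makes all coordinates nonzero; after compressing again, each \(p_n\sim q_n\) via a partial isometry \(v_n\in B_n\) with \(\|v_n\|\le 1\). Then \(v=(v_n)_n\in B\) implements \(p\sim q\), so the class is \(0\). The key point is that the partial isometries are norm-bounded, so they assemble into an element of the product.

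\textbf{Purely infinite case, degree \(1\).} For \(B_n\) purely infinite simple, \(\Kgrp[1]{B_n}\cong \CU(B_n)/\CU(B_n)_0\) (Cuntz), so no matrix amplification is needed. Surjectivity is then immediate by tupling unitaries. For injectivity, the same compression trick reduces a unitary in \(M_k(B)\) to a unitary in \(B\). If \(u_n\in\CU(B_n)_0\) for every \(n\), I need paths to \(1\) of uniformly bounded length. This is exactly what Phillips' exponential rank results give: \(\mathrm{cel}(B_n)\le \pi\) for purely infinite simple \(B_n\), and indeed \(u_n=e^{ih_n}\) is achievable with \(\|h_n\|\) bounded. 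With \(h=(h_n)_n\in B\), we get \(u=e^{ih}\sim_h 1\). This uniform bound is the main obstacle, and I would cite Phillips' bound on the exponential length rather than prove it.

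\textbf{Tracially AF case.} Only injectivity is claimed. TAF simple algebras have stable rank one (Lin), cancellation of projections, and real rank zero. In degree \(0\), a class in \(\Kgrp[0]{B}\) is \([p]_0-[q]_0\) with \(p,q\in M_k(B)\), so \(k\) is uniform. If \([p_n]=[q_n]\) for all \(n\), then cancellation gives \(p_n\sim q_n\) already in \(M_k(B_n)\), since stable rank one prevents any need for extra room. The partial isometries are contractions, so they assemble into an element of \(M_k(B)\).

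In degree \(1\), stable rank one yields injectivity of \(\CU(M_k(B_n))/\CU_0\to \Kgrp[1]{B_n}\). Lin's result that the exponential length in TAF (real rank zero, stable rank one) algebras is bounded by a universal constant (\(2\pi\)) gives uniform paths, and these assemble exactly as in the purely infinite case. Once more, the only real input is the uniform exponential-length bound.
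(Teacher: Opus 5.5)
Your proof is correct, and it takes a different route from the paper's.

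\textbf{How the paper argues.} The paper does not argue directly. It cites Dadarlat--Eilers' notion of an admissible algebra of infinite type, which consists of these axioms:
\begin{enumerate}
\item real rank zero;
\item $[p]_0=[q]_0 \Rightarrow p\oplus 1_B\sim q\oplus 1_B$;
\item surjectivity of $\mathcal{U}(B)\to K_1(B)$;
\item surjectivity of projections onto $K_0(B)$.
\end{enumerate}
It then cites that purely infinite simple algebras, and (for the first three axioms) simple TAF algebras, satisfy them. $K_0$-injectivity comes from the stabilized cancellation axiom, surjectivity from the two surjectivity axioms, and $K_1$-injectivity from Dadarlat--Eilers' Lemma 3.3 via real rank zero.

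\textbf{How your route differs.} You go straight to the underlying structure theorems:
\begin{itemize}
\item Cuntz's description of $K_0$ and $K_1$ of purely infinite simple algebras (no matrix amplification, and nonzero projections classified by their $K_0$-class);
\item a compression $M_k(B)\to B$ by isometries with orthogonal ranges, to make matrix sizes uniform;
\item cancellation plus $K_1$-injectivity from stable rank one in the TAF case.
\end{itemize}
The $K_1$-injectivity of the product map then becomes an explicit uniform exponential-length bound: Phillips for purely infinite simple algebras, Lin's $\mathrm{cel}\le 2\pi$ for real rank zero.

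\textbf{What each route buys.} Yours shows exactly where uniformity comes from: partial isometries are contractions, and paths to $1$ have bounded length. It also sidesteps the extra $\oplus 1_B$ room by using Cuntz's stronger classification, at the cost of your zero-class trick. The paper's route is much shorter. Because it is phrased through abstract axioms, it applies verbatim to any family of algebras satisfying them, not just the two classes in the statement.

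\textbf{Two small points to tighten.}
\begin{itemize}
\item Your parenthetical claim that each $u_n\in\mathcal{U}_0(B_n)$ is a single exponential $e^{ih_n}$ with $\|h_n\|$ uniformly bounded is more than the cited results give. They give $\mathrm{cer}\le 1+\varepsilon$ and a bound on $\mathrm{cel}$. The claim is also unnecessary. Either write $u_n=e^{ih_n}e^{ik_n}$ with both norms uniformly bounded, or reparametrize paths of length at most $C$ proportionally to arc length. The coordinate paths are then uniformly $C$-Lipschitz and assemble into a norm-continuous path in $\prod_n B_n$.
\item When compressing a unitary $U\in M_k(B)$, use $\sigma(U)+1-\sigma(1_k)$, since $\sigma$ is not unital. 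This is a unitary in $B$ with the same $K_1$-class.
\end{itemize}
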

\begin{proof}
  This follows from \cite{dadarlat-eilers-2002}*{Section 3.2 and Propositions 6.6 and 6.15}.
  Indeed, in \cite{dadarlat-eilers-2002}*{Proposition 6.15} it is sketched why a purely infinite, simple, unital \cstar{}algebra is an \emph{admissible algebra of infinite type}~\cite{dadarlat-eilers-2002}*{Definition 4.9}, meaning that
  \begin{enumerate}
    \item \label{rr0} it has real rank \(0\);
    \item \label{inj-k0} for all \(p, q \in {\rm Proj}(B \tensor \CK)\), if \([p]_0 = [q]_0\) then \(p \oplus 1_{B} \sim q \oplus 1_{B}\);
    \item \label{surj-k1} the canonical map \(\CU_1(B) \to \Kgrp[1]{B}\) is surjective; and
    \item \label{surj-k0} the canonical map \({\rm Proj}(B \tensor \CK) \to \Kgrp[0]{B}\) is surjective.
  \end{enumerate}
  If every \(\{B_n\}_{n \in \NN}\) satisfies \cref{inj-k0} then the map \(\Kgrp[0]{\prod_{n \in \NN} B_n} \to \prod_{n \in \NN} \Kgrp[0]{B_n}\) is injective, since the equality \([p]_0 = [(p_n)_{n \in \NN}]_0 = [(q_n)_{n \in \NN}]_0 = [q]_0\) can be witnessed in \(\prod_{n \in \NN} M_{m(n) + 1}(B_n)\).
  Likewise, \cref{surj-k1,surj-k0} ensure that \(\phi \colon \Kgrp[\star]{\prod_{n \in \NN} B_n} \to \prod_{n \in \NN} \Kgrp[\star]{B_n}\) is surjective, as any element in the target can be realized by matrices of uniformly bounded size.
  Thus, all we have to do is show that \(\Kgrp[1]{\prod_{n \in \NN} B_n} \to \prod_{n \in \NN} \Kgrp[1]{B_n}\) is injective, which is done in \cite{dadarlat-eilers-2002}*{Lemma 3.3}, and crucially uses that \(B_n\) has real rank \(0\). 
  In the case when \(\{B_n\}_{n \in \NN}\) are simple, unital, infinite dimensional and tracially AF, it is proved in \cite{dadarlat-eilers-2002}*{Proposition 6.6} that they also satisfy \cref{rr0,inj-k0,surj-k1} as above.
  %By the same tokens as before this implies that the map \(\Kgrp[\star]{\prod_{n \in \NN} B_n} \to \prod_{n \in \NN} \Kgrp[\star]{B_n}\) is an isomorphism in \(\Kgrp[1]{\variable}\) and injective in \(\Kgrp[0]{\variable}\), as needed.
\end{proof}

\begin{proof}[Proof of \cref{main-theorem}]
  Fix separable \cstar{}algebras \(A\) and \(\{B_n\}_{n \in \NN}\) as in \cref{main-theorem}: \(A\) is separable and enjoys the UCT and \(\{B_n\}_{n \in \NN}\) is separable and the map \(\phi\) of \eqref{phi} is injective.
  Let \(\Phi\) be:
  \begin{equation} \label{eq:phi}
    \Phi \colon \KKgrp{A}{\prod_{n \in \NN} B_n} \to \prod_{n \in \NN} \KKgrp{A}{B_n}, \;\; \text{ where } \;
    \left[\left(\varphi_n\right)_{n \in \NN}, \left(\psi_n\right)_{n \in \NN}\right] \mapsto \left(\left[\varphi_n, \psi_n\right]\right)_{n \in \NN},
  \end{equation}
  where we use Cuntz' picture of \(\KKgrp{A}{B}\): elements in \(\KKgrp{A}{B}\) are homotopy classes of pairs of \Star{}homomorphisms \(\varphi, \psi \colon A \to \multialg{B \tensor \CK}\) such that \(\varphi(a) - \psi(a) \in B \tensor \CK\) for all \(a \in A\).
  Observe that \(\Phi\) is a surjective group homomorphism. Indeed, it is a group morphism because, coordinate wise, it is just given by the Kasparov product by the projection \(\pi_k \colon \prod_{n \in \NN} B_n \to B_k\):
  \[
    \KKgrp{A}{\prod_{n \in \NN} B_n} \cdot \left[\pi_k\right] \in \KKgrp{A}{\prod_{n \in \NN} B_n} \times \KKgrp{\prod_{n \in \NN} B_n}{B_k} \subseteq \KKgrp{A}{B_k},
  \]
  which indeed is a group morphism.

  We turn to injectivity of \(\Phi\).
  Noticing that \(\phi \colon \Kgrp[\star]{\prod_{n \in \NN} B_n} \to \prod_{n \in \NN} \Kgrp[\star]{B_n}\) is injective, one immediately realizes that \(\Phi\) fits in the following commuting diagram:
  \begin{equation} \label{diagram}
    \begin{tikzcd}
      \Extgrp{\Kgrp[\star]{A}}{\Kgrp[\star + 1]{\prod_{n \in \NN} B_n}} \arrow{d}{\Xi \phi^*} \arrow[hook]{r}{} & \KKgrp{A}{\prod_{n \in \NN} B_n} \arrow[two heads]{r}{} \arrow{d}{\Phi} & \Homgrp{\Kgrp[\star]{A}}{\Kgrp[\star]{\prod_{n \in \NN} B_n}} \arrow{d}{\Psi \phi^*} \\
      \prod_{n \in \NN} \Extgrp{\Kgrp[\star]{A}}{\Kgrp[\star+1]{B_n}} \arrow[hook]{r}{} & \prod_{n \in \NN} \KKgrp{A}{B_n} \arrow[two heads]{r}{} & \prod_{n \in \NN} \Homgrp{\Kgrp[\star]{A}}{\Kgrp[\star]{B_n}}.
    \end{tikzcd}
  \end{equation}
  The map on \(\Psi \phi^*\) on the right hand side is injective: it is precisely the map appearing in \cref{hom-iso} pre-composed with \(\phi \colon \Kgrp[\star]{\prod_{n \in \NN} B_n} \to \prod_{n \in \NN} \Kgrp[\star]{B_n}\), which is also injective by assumption.
  The horizontal rows are short exact because \(A\) enjoys the UCT.
  Thus, by the Five Lemma (or by chasing an element in \(\ker(\Phi)\) in \eqref{diagram}), in order to prove that \(\Phi\) is injective it suffices to prove that \(\Xi \phi^*\) is injective, where \(\phi^*\) is as in \eqref{eq-phi-induced}.
  The morphism \(\Xi\) is injective by \cref{ext-inj}, whence it suffices to prove that \(\phi^*\) is injective.
  By assumption, \(\phi\) is injective, so there is some short exact sequence
  \[
    0 \to \Kgrp[\star]{\prod_{n \in \NN} B_n} \xrightarrow{\phi} \prod_{n \in \NN} \Kgrp[\star]{B_n} \xrightarrow{q} H \to 0.
  \]
  Applying the long exact sequence of \(\Extgrp{\variable}{\variable}\) we find that
  \begin{align*}
    \ker\left(\Phi\right) & \subseteq \ker\left(\Xi'\right) \subseteq \ker\left(\Extgrp{\Kgrp[\star]{A}}{\Kgrp[\star]{\prod_{n \in \NN} B_n}} \xrightarrow{\phi^*} \Extgrp{\Kgrp[\star]{A}}{\prod_{n \in \NN} B_n}\right) \\
    & = {\rm im}\left(\Homgrp{\Kgrp[\star]{A}}{H} \to \Extgrp{\Kgrp[\star]{A}}{\Kgrp[\star]{\prod_{n \in \NN} B_n}}\right).
  \end{align*}
  Therefore, it suffices to prove that the right hand side above is zero.
  In the first scenario, \(\phi\) is an isomorphism, whence \(H = 0\). Thus \(\Homgrp{\Kgrp[\star]{A}}{H} = 0\) as well.
  In the second scenario, \(\Kgrp[\star]{A}\) is torsion free and thus \(\Extgrp{\Kgrp[\star]{A}}{\variable} = 0\) is always trivial.
  The ``in particular'' statement is \cref{inj-kth}.
\end{proof}

\begin{proof}[Proof of \cref{intro-cor}]
  Going back to \eqref{diagram} one realizes that if \(\Psi \phi^*\) and \(\Xi \phi^*\) are isomorphisms then so is \(\Phi\).
  These are isomorphisms because so are \(\Psi\) and \(\Xi\), see \cref{hom-iso,ext-inj}, and so is \(\phi \colon \Kgrp[\star]{\prod_{n \in \NN} B_n} \to \prod_{n \in \NN} \Kgrp[\star]{B_n}\) by \cref{inj-kth}.
\end{proof}

%%%%%%%%%%%%%%%%%%%%%%%%%%%%%%%%%%%%%%%%%%%%%%%%%%%%%%%%%%%%%%%%%%%%%%%
% BIBLIOGRAPHY
%%%%%%%%%%%%%%%%%%%%%%%%%%%%%%%%%%%%%%%%%%%%%%%%%%%%%%%%%%%%%%%%%%%%%%%
\bibliography{bibprodkk}

\end{document}